\documentclass[11pt]{article}

\usepackage[margin=1.05in]{geometry}
\usepackage{amsmath,amssymb,amsthm,mathtools}
\usepackage[hidelinks]{hyperref}
\usepackage[nameinlink,capitalise]{cleveref}
\usepackage{titlesec}

\titleformat{\section}
  {\centering\normalfont\scshape}
  {\thesection.}
  {0.5em}
  {}
  
\newtheorem{theorem}{Theorem}[section]
\newtheorem{proposition}[theorem]{Proposition}
\newtheorem{lemma}[theorem]{Lemma}
\newtheorem{corollary}[theorem]{Corollary}
\theoremstyle{definition}
\newtheorem{definition}[theorem]{Definition}
\theoremstyle{remark}
\newtheorem{remark}[theorem]{Remark}

\newcommand{\R}{\mathbb R}
\newcommand{\cB}{\mathcal B}
\newcommand{\cE}{\mathcal E}
\newcommand{\cH}{\mathcal H}

\newcommand{\cK}{\mathcal K}
\newcommand{\cQ}{\mathcal Q}
\newcommand{\Ind}{\operatorname{Ind}}
\newcommand{\Span}{\operatorname{span}}
\newcommand{\Id}{\operatorname{Id}}

\title{Catenoid-sharp index–topology estimates for minimal hypersurfaces}
\author{Junzhang Li%
\thanks{Chern Institute of Mathematics, Nankai University, Tianjin 300071, China, \href{2112355@mail.nankai.edu.cn}{\texttt{2112355@mail.nankai.edu.cn}}}
}
\date{}
\begin{document}
\maketitle

\begin{abstract}
 We show that for an embedded two-sided
minimal hypersurface in $\R^N$, there is a lower bound for the index in
terms of the first Betti number and the number of ends, using ideas from the recent work of Chodosh-Gianocca.  This estimate is
sharp for the higher-dimensional catenoid. We also obtain a
$\operatorname{Spin}(7)$ analogue of \cite[Theorem~10.1]{CG}: a complete
two-sided minimal immersion $M^7\to\R^8$ of index one and finite total
curvature is a higher-dimensional catenoid.
\end{abstract}

\section{Introduction}
Throughout, let $N=n+1\geq 4$. Consider a complete embedded minimal hypersurface \(M^n\subset\mathbb{R}^{N}\). For \(u\in C_c^\infty(M)\), let
\begin{equation}\label{eq:second-variation}
\mathcal{Q}(u,u)=\int_M |\nabla u|^2-|A|^2u^2
\end{equation}
be the second variation of area, where \(A\) is the second fundamental form, and define the \emph{Morse index} by
\begin{equation}\label{eq:morse-index}
\operatorname{Ind}(M):=\sup\bigl\{\dim U:U\subset C_c^\infty(M),\ \mathcal{Q}(u,u)<0\text{ for all }u\in U\setminus\{0\}\bigr\}.
\end{equation}
We say that \(M\) has \emph{finite total curvature} if \(\int_M |A|^n\,d\mu<\infty\). We write \(b_1(M)\) for its first Betti number and \(k\) for its number of ends.

The relation between Morse index and topology is particularly well understood for minimal surfaces in \(\mathbb{R}^3\). For an upper bound of the index, in dimension two, an earlier quantitative
estimate was obtained by Tysk \cite{Tys87}, while sharper estimates,
valid for complete oriented minimal surfaces of finite total curvature
in Euclidean spaces of arbitrary dimension, were subsequently proved
by Ejiri--Micallef \cite{EM08}. As for the lower bound, Chodosh--Maximo \cite{CM16} first proved that a complete two-sided immersed minimal surface of genus \(g\) with \(k\) ends satisfies
\[
\operatorname{Ind}(M)\geq \frac{2}{3}(g+k)-1,
\]
and used this estimate to rule out complete embedded minimal surfaces of index two. In their sequel \cite{CM23} they proved, when the ends are embedded, the sharper estimate
\begin{equation}\label{eq:CM-bound}
\operatorname{Ind}(M)\geq \frac{1}{3}(2g+4k-5).
\end{equation}
Their fully general estimate also records the multiplicities of nonembedded ends. Among its additional low-index consequences are the nonexistence of a complete two-sided minimal immersion in \(\mathbb{R}^3\) of index two and of a complete embedded minimal surface in \(\mathbb{R}^3\) of index three.

The use of harmonic $1$-forms to construct test functions goes back
to Ros \cite{Ros06} and Savo \cite{Sav10}. Chao Li \cite{Li}
adapted this method to complete higher-dimensional minimal
hypersurfaces. In our notation, his general estimate gives
\begin{equation}\label{eq:Li-index-nullity}
\operatorname{Ind}(M)+\operatorname{nullity}(M)\geq \frac{b_1(M)+k-1}{\binom{N}{2}},
\end{equation}
while his index-only estimate is
\begin{equation}\label{eq:Li-index}
\operatorname{Ind}(M)\geq \frac{b_1(M)+k}{\binom{N}{2}}-\frac{4}{N}.
\end{equation}
The latter holds unconditionally when \(N=4\), and for \(N\geq5\) under the assumption that \(M\) has a point at which all principal curvatures are distinct.

Recently, Chodosh--Gianocca introduced a larger space of harmonic \(1\)-forms together with admissible pairings of ambient \(2\)-forms. Their main theorem states that every complete, connected, embedded minimal hypersurface of finite total curvature and index one is a higher-dimensional catenoid \cite[Theorem~1.1]{CG}. Two results from their final section are especially relevant here. First, in \(\mathbb{R}^4\) the self-dual projection allows embeddedness to be removed: every complete, connected, two-sided minimal immersion \(M^3\to\mathbb{R}^4\) of index one is a catenoid \cite[Theorem~10.1]{CG}. Second, for a non-flat, complete, connected, embedded minimal hypersurface \(M^3\subset\mathbb{R}^4\) of finite index, they prove
\begin{equation}\label{eq:CG-index-bound}
\operatorname{Ind}(M)\geq \frac{b_1(M)+k+1}{3}
\end{equation}
\cite[Theorem~10.2]{CG}. The denominator \(3\), rather than \(\dim\Lambda^2\mathbb{R}^4=6\), comes from the fixed rank-three self-dual pairing.

Motivated by this framework, our first result gives an index--topology estimate in every ambient dimension. It may be viewed as an arbitrary-dimensional version of the finite-index dimension count behind \cite[Theorem~10.2]{CG}, using an adapted admissible pairing in the last negative eigendirection.
\begin{theorem}\label{thm:main}
For \(N\geq 4\), suppose that \(M^n\subset\mathbb{R}^N\) is a non-flat, complete, connected, embedded minimal hypersurface with finite total curvature. Then
\begin{equation}\label{eq:main}
\Ind(M)\geq 1+\frac{b_1(M)+k-2}{\binom{N}{2}}.
\end{equation}
\end{theorem}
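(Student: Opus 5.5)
We follow the Chodosh--Gianocca harmonic $1$-form method. The aim is to sharpen Chao Li's count \eqref{eq:Li-index-nullity} by one full unit: one negative direction is ``spent'' via an admissible pairing adapted to the last negative eigenfunction, and nullity is avoided by using the finite total curvature hypothesis.

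\emph{Step 1 (the space of test forms).} Let $\cH$ be the space of $L^2$ harmonic $1$-forms on $M$, enlarged as in \cite{CG} by the forms $d\,(\text{bounded harmonic functions})$ that separate ends. Because $M$ has finite total curvature, its ends are regular, i.e.\ asymptotic to multiplicity-one hyperplanes. The Hodge-theoretic arguments of \cite{CG} (and \cite{Li}) then give
\[
\dim\cH\geq b_1(M)+k-1.
\]

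\emph{Step 2 (test functions and the quadratic form).} For $\omega\in\cH$ and an ambient constant $2$-form $\eta\in\Lambda^2\R^N$, set $u_{\omega,\eta}=\langle \omega^\sharp\wedge\nu,\eta\rangle$, where $\nu$ is the unit normal. Summing the second variation over an orthonormal basis of $\Lambda^2\R^N$ and using the Bochner formula, one obtains the standard identity $\sum_\eta\cQ(u_{\omega,\eta},u_{\omega,\eta})\le 0$. Equality holds only if $\omega$ satisfies a rigidity condition, which forces $M$ to be flat or catenoidal. Cutoff errors vanish in the limit, since $|\omega|$ decays like $r^{1-n}$ on regular ends; we make this precise first.

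\emph{Step 3 (dimension count with an adapted pairing).} Suppose $\Ind(M)=I$. Let $\phi_1,\dots,\phi_{I}$ be the $L^2$ negative eigenfunctions of the Jacobi operator; they exist and decay because of finite total curvature. Define the linear map
\[
\Phi:\cH\otimes\Lambda^2\R^N\to\R^{I},\qquad \Phi(\omega\otimes\eta)_j=\int_M u_{\omega,\eta}\,\phi_j .
\]
Li's argument uses all $I\binom N2$ constraints. We instead handle the last eigendirection $\phi_I$ differently. Following \cite[Theorem~10.2]{CG}, we choose an admissible pairing: a $\cQ$-nonpositive linear map $\cH\to\Lambda^2\R^N$, adapted to $\phi_I$, for which the single scalar constraint $\int u\,\phi_I=0$ can always be met inside each $\omega$-fiber after an orthogonal rotation of $\eta$. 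This costs one dimension of $\cH$ rather than $\binom N2$. The remaining $I-1$ eigenfunctions impose at most $(I-1)\binom N2$ linear conditions. So if
\[
b_1+k-1-1>(I-1)\tbinom N2,
\]
there is a nonzero $\omega$ whose admissible family of test functions is $\cQ$-orthogonal to all negative directions. Averaging then gives a nonzero $u$ with $\cQ(u,u)\le0$ lying in the nonnegative space, so $u$ is a Jacobi field. The rigidity case of Step 2 then forces $M$ to be flat (excluded) or a catenoid. For the catenoid, $b_1+k-2=0$, so \eqref{eq:main} holds anyway. Rearranging the contradiction gives $I\geq 1+(b_1+k-2)/\binom N2$.

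\emph{Main obstacle.} The hardest step is the construction in Step 3 of the adapted pairing for general $N$. In $\R^4$, \cite{CG} uses the self-dual splitting, which has no analogue in general dimension. My first attempt would be to take $\eta$ ranging over the $\binom N2$-dimensional space, and then use a Borsuk--Ulam / averaging argument over the orthogonal group $SO(N)$ acting on $\eta$. The goal is to show that the functional $\eta\mapsto\int u_{\omega,\eta}\phi_I$, restricted to the set where the summed $\cQ$ is nonpositive, has a zero along some direction. Failing that, I would use a trace argument on $\phi_I$ weighted by the sum over a basis to absorb that one constraint. A secondary technical issue is showing that the equality case produces a genuine Jacobi field, which requires the decay estimates for $\omega$.
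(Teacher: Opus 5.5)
Your outline has the right architecture (harmonic $1$-forms, test functions $\langle\nu\wedge\omega^\sharp,\Theta\rangle$, and the full basis of $\Lambda^2\R^N$ for the first $I-1$ eigenfunctions). However, the step that carries the theorem, handling $\phi_I$ at the cost of ``one dimension'', is asserted rather than proved, and as stated it is false. The summed identity only controls $\sum_a\cQ(u_a,u_a)$. So you must apply the index decomposition (\cref{lem:index-decomposition}) to \emph{each} $u_a$ in the pairing set. There is no single scalar constraint, and averaging to one $u$ does not help; one Jacobi field would not suffice for the rigidity step anyway.

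For fixed $\omega$, all the $\phi_I$-constraints are encoded in the $2$-form $\mathcal T_I(\omega)$ defined by $\langle\mathcal T_I(\omega),\Theta\rangle=\cQ_\infty(\langle\nu\wedge\omega^\sharp,\Theta\rangle,\phi_I)$. What you need is $\mathcal T_I(\omega)\in\ker P_\Omega$ for some admissible $P_\Omega=\Id+K_\Omega\geq0$. Rotating an orthonormal basis changes nothing: for $P=\Id$ the condition is $\mathcal T_I(\omega)=0$, which is $\binom N2$ conditions. By \eqref{eq:decomposable}, no admissible $P_\Omega$ kills a nonzero decomposable form. Since $e^1\wedge\R^{N-1}$ is an $(N-1)$-dimensional space of decomposable forms, the linear algebra alone cannot make the last eigendirection cost fewer than $N$ dimensions. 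No Borsuk--Ulam or trace argument changes this.

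The missing idea is \cref{prop:balanced-existence,prop:balanced-pairing}. Take an $N$-dimensional $V\subset W=\bigcap_{i<I}\ker\mathcal T_i$ with $V\cap\cE=\{0\}$. Either $\mathcal T_I|_V$ has a kernel, and you use $\Omega=0$. Or $\mathcal T_I(V)$ contains a nonzero balanced form $\eta=\mathcal T_I(\omega)$. In that case the $P_\Omega\geq0$ with $P_\Omega\eta=0$ gives $\sum_a\langle\eta,\Theta_a\rangle^2=\langle P_\Omega\eta,\eta\rangle=0$.

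This $N$-dimensional price is affordable only with the larger Chodosh--Gianocca space $\cH$. Its forms are asymptotic to $h_\alpha q^\top$ with $q\in\Pi$, so $\dim\cH\geq b_1+k+n-1$ rather than $b_1+k-1$. For $q\neq0$ the test functions are not in $\cB$ and must be corrected by the dilation field, $\widetilde u_a=u_a-c_a(q)Z$. Then \eqref{eq:prop-7.2} gives $\sum_a\cQ(\widetilde u_a,\widetilde u_a)=-|q|^2(n-1)\int_M|\nabla x^N|^2\leq0$, and the corollary to the index decomposition forces $q=0$. With only decaying forms, the correct count yields merely $b_1+k-1-N\leq(I-1)\binom N2$.

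Your rigidity step is also misstated. For decaying $\omega$ the summed identity is exactly $0$, not a strict inequality with a rigid equality case. Moreover, all $u_a^\omega$ being Jacobi fields does not force $M$ to be a catenoid: $\omega=dx^N$ produces the rotation Jacobi fields $\langle\nu\wedge e_N,\Theta_a\rangle$ on every $M$. The correct conclusion is \eqref{eq:prop-8.1}, $\cK_\Omega=\Span\{dx^N\}=\cE$. So the $\omega$ you produce must lie outside $\cE$, which costs a dimension your count never pays.

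Your final inequality agrees with \eqref{eq:main} only because the $n$ directions you left out of $\cH$ offset the costs you undercounted (the last eigendirection and $\cE$). The argument as written does not establish it.
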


For the higher-dimensional catenoid, \(b_1=0\), \(k=2\), and \(\Ind=1\) \cite{TZ09}, so equality holds in \eqref{eq:main}. Thus \eqref{eq:main} is "sharp" on the higher-dimensional catenoid in
every ambient dimension. Compared with C. Li's index estimate \eqref{eq:Li-index}, \eqref{eq:main} has the same coefficient of \(b_1+k\), improves the additive term, and for \(N\geq5\) does not require the existence of a point with distinct principal curvatures. On the other hand, when \(N=4\), the exceptional rank-three pairing makes \eqref{eq:CG-index-bound} stronger than the specialization of \eqref{eq:main}.

The denominator \(\binom{N}{2}\) in \eqref{eq:main} is not expected to be optimal. In ambient dimension eight, a fixed Cayley \(4\)-form \(\Phi\) gives the \(\operatorname{Spin}(7)\)-decomposition
\[
\Lambda^2\mathbb{R}^8=\Lambda^2_7\oplus\Lambda^2_{21}
\]
and hence a rank-seven admissible pairing. In the embedded finite-index setting, this reduces the number of test functions from \(\dim\Lambda^2\mathbb{R}^8=28\) to \(7\) and yields the following stronger estimate.
\begin{theorem}\label{thm:spin7-index-bound}
Suppose that \(M^7\subset\mathbb{R}^8\) is a non-flat, complete, connected, embedded minimal hypersurface with finite total curvature. Then
\begin{equation}\label{eq:spin7-index-bound}
\Ind(M)\geq\frac{b_1(M)+k+5}{7}.
\end{equation}
\end{theorem}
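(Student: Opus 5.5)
We follow the Chodosh--Gianocca scheme, replacing the full space $\Lambda^2\R^8$ by the seven-dimensional piece $\Lambda^2_7$. Let $\cH$ be the space of $L^2$ harmonic $1$-forms on $M$ from \cite{CG}, augmented by the forms coming from the ends. By finite total curvature and embeddedness, each end is regular, i.e.\ asymptotic to a multiplicity-one hyperplane. Then the standard Hodge-theoretic count gives $\dim\cH\geq b_1(M)+k-1$. We will use this lower bound on $\dim\cH$ as a black box.

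For $\omega\in\cH$ and the unit normal $\nu$, we form the ambient $2$-form $\omega^\flat\wedge\nu$, viewed at each point as an element of $\Lambda^2\R^8$. We then take its projection $\pi_7$ onto $\Lambda^2_7$, which is defined using the Cayley form $\Phi$. This gives seven test functions $u^\omega_a=\langle \pi_7(\omega\wedge\nu),e_a\rangle$, where $e_1,\dots,e_7$ is an orthonormal basis of $\Lambda^2_7$.

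The key step is the admissibility of the pairing, i.e.\ the trace inequality
\[
\sum_{a=1}^7 \cQ(u^\omega_a,u^\omega_a)\leq 0,
\]
with equality only in a rigid situation. We will prove it by the Bochner-type computation of \cite{CG}. In this computation the cross terms $\langle \nabla\omega, A\otimes\omega\rangle$ are controlled by the projection identity $|\pi_7(\alpha\wedge\beta)|^2=\tfrac{7}{28}|\alpha\wedge\beta|^2$ for orthonormal $\alpha,\beta$. This identity holds because $\mathrm{Spin}(7)$ acts transitively on such $2$-planes, exactly as the self-dual projection does in $\R^4$. Finite total curvature gives decay of $\omega$ and $A$, which justifies the integration by parts and the use of cutoffs.

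The counting then goes as follows. The map $\cH\otimes\R^7\to$ test functions is linear. If $\dim\cH>7\,\Ind(M)-c$, a linear-algebra argument (as in \cite[Theorem~10.2]{CG}) produces a nonzero $\omega$ that is $\cQ$-orthogonal to the negative space. For this $\omega$ all seven functions satisfy $\cQ(u_a,u_a)\geq0$, so the trace inequality forces equality and hence rigidity.

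To obtain the additive constant $+5$, i.e.\ $b_1+k-1\leq 7\Ind-6$, we will not count naively. Following the "adapted pairing in the last negative eigendirection" used for the main theorem, we rotate $\Phi$ by an element of $SO(8)$ chosen so that $\Lambda^2_7$ aligns with the lowest eigenfunction $\varphi_1$. The aim is that six of the seven components can be made orthogonal to $\varphi_1$ without consuming a dimension of $\cH$. This saves six dimensions relative to the naive count $b_1+k-1\leq 7\Ind$.

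We expect this alignment to be the main obstacle. Concretely, we will try to show that the map $\Phi\mapsto\bigl(\int \varphi_1 u^\omega_a\bigr)_a$ can be killed in six directions by choosing $\Phi$ in the $21$-dimensional orbit $SO(8)/\mathrm{Spin}(7)$. Failing that, we would use the non-flatness and rigidity (equality characterizes the catenoid-type case via \cite{CG}) to exclude the degenerate configurations.
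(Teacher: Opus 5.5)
\textbf{The gap is in where the additive constant comes from.} The only lower bound you use is $\dim\cH\geq b_1(M)+k-1$, which is the bound for the decaying subspace $\cH_0$. You then try to recover the $+5$ by rotating $\Phi$ to ``align $\Lambda^2_7$ with $\varphi_1$.'' This step has no mechanism behind it.

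\begin{itemize}
\item A fixed rotation $g\Phi$ just replaces $\Lambda^2_7$ by another seven-dimensional space $g\Lambda^2_7$. The seven linear functionals $\omega\mapsto\cQ_\infty(u_a^\omega,\varphi_1)$ can still impose seven independent conditions on $\cH$, and nothing makes six of them vanish identically.
\item Choosing $g$ depending on $\omega$ destroys the linearity that the kernel count needs. The main theorem gets around this only through the balanced-form results (\cite[Propositions~6.7, 6.8]{CG}), and you propose no $\operatorname{Spin}(7)$ counterpart.
\item The orbit $SO(8)/\operatorname{Spin}(7)$ is $7$-dimensional ($28-21$), not $21$-dimensional.
\item Your ``naive count'' is off by one. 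The form $dx^8$ lies in your space, and its test functions $\langle\nu\wedge e_8,\Theta_a\rangle$ are rotation Jacobi fields. So it is automatically in the kernel, and rigidity gives no contradiction for it. The honest naive count is $b_1+k-1\leq 7I+1$, so you would need to save seven dimensions, not six.
\end{itemize}

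\textbf{The missing idea:} the constant comes from enlarging the space of harmonic forms, not from choosing the pairing.

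\begin{enumerate}
\item Take the full space $\cH$ of \cite[Definition~5.1]{CG}: harmonic $1$-forms with $\omega^\sharp$ asymptotic to $h_\alpha q^\top$ on each end $E_\alpha$, for a common $q\in\Pi\cong\R^7$. It satisfies $\dim\cH\geq b_1(M)+k+n-1=b_1(M)+k+6$.
\item For $q\neq0$ the test functions do not decay. Correct them by the dilation Jacobi field $Z=\langle x,\nu\rangle$, setting $\widetilde u_a^\omega=u_a^\omega-c_a(q)Z$.
\item Then \cite[Proposition~7.2]{CG} gives $\sum_a\cQ(\widetilde u_a^\omega,\widetilde u_a^\omega)=-|q|^2(n-1)\int_M|\nabla x^8|^2\leq0$. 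This is strictly negative when $q\neq0$, since $M$ is non-flat.
\item Use the fixed pairing $\Theta_1,\ldots,\Theta_7$ with $P_\Phi=4P_7$, and the linear map $\omega\mapsto\bigl(\cQ_\infty(u_a^\omega,\varphi_i)\bigr)_{a,i}\in\R^{7I}$. For any $\omega$ in its kernel, the finite-index decomposition lemma gives $\cQ(\widetilde u_a^\omega,\widetilde u_a^\omega)\geq0$ for each $a$. Hence $q=0$, each summand vanishes, and every $u_a^\omega$ is a Jacobi field.
\item Your description of rigidity is slightly off. For decaying $\omega$ the trace identity is always an equality, so rigidity comes from each summand being nonnegative, not from an equality case of an inequality.
\item The nullity characterization \cite[Proposition~8.1]{CG} then gives $\omega\in\Span\{dx^8\}$. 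So $\dim\cH\leq7I+1$, and with $\dim\cH\geq b_1+k+6$ this is exactly $7I\geq b_1+k+5$.
\end{enumerate}

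No rotation of $\Phi$ is needed. Your correct identity $|\pi_7\eta|^2=\tfrac14|\eta|^2$ for decomposable $\eta$ (admissibility of $P_\Phi$) is the only $\operatorname{Spin}(7)$ algebra the proof uses.
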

The same fixed pairing is compatible with the translation-field argument of \cite[Theorem~10.1]{CG} and therefore also treats immersed hypersurfaces whose ends need not be parallel.
\begin{theorem}\label{thm:spin7-index-one}
Suppose that \(M^7\to\mathbb{R}^8\) is a complete, connected, two-sided minimal immersion with finite total curvature and \(\Ind(M)=1\). Then \(M\) is a higher-dimensional catenoid.
\end{theorem}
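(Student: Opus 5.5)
We follow the proof of \cite[Theorem~10.1]{CG}, replacing the self-dual projection $\Lambda^2\mathbb{R}^4\to\Lambda^2_+$ by the $\operatorname{Spin}(7)$ projection $\pi_7:\Lambda^2\mathbb{R}^8\to\Lambda^2_7$ determined by a fixed Cayley form $\Phi$. The goal is to show that index one forces $b_1(M)=0$ and exactly two ends, each an asymptotically planar end of multiplicity one with parallel limiting planes. Once this is established, the immersion is embedded outside a compact set, and the classification of \cite[Theorem~1.1]{CG} (or Schoen-type uniqueness, since the ends are regular and parallel) identifies $M$ as the catenoid.

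\textbf{Test functions.} For each harmonic $L^2$ 1-form $\omega$ on $M$ in the Chodosh--Gianocca class, we will use the components
\[
u_{\omega,\alpha}=\langle \pi_7(\omega^\sharp\wedge\nu),\alpha\rangle,\qquad \alpha\in\Lambda^2_7 ,
\]
where $\nu$ is the unit normal. These are seven functions per form. The first step is to verify that the fixed pairing $(\omega,\alpha)\mapsto\langle \omega^\sharp\wedge\nu,\alpha\rangle$ restricted to $\Lambda^2_7$ is admissible in the sense of Chodosh--Gianocca. This requires the pointwise Bochner-type identity summed over an orthonormal basis of $\Lambda^2_7$,
\[
\sum_\alpha \mathcal{Q}(u_{\omega,\alpha},u_{\omega,\alpha})\le 0 ,
\]
with equality only in rigid situations. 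We will try to prove it by the same algebra as in $\mathbb{R}^4$. The key fact is that the map $v\mapsto\pi_7(v\wedge\nu)$ is a $\frac{\sqrt{7}}{\sqrt{8}}$-multiple of an isometry from $\nu^\perp\cong\mathbb{R}^7$ onto $\Lambda^2_7$. This is the octonionic analogue of the self-dual identification, and it is the reason the immersion need not be embedded and the ends need not be parallel. Once we have it, the trace over $\alpha$ reduces the computation to the Ros--Savo formula up to this constant.

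\textbf{Translation fields.} As in \cite[Theorem~10.1]{CG}, we also use the functions $\langle e,\nu\rangle$ for $e\in\mathbb{R}^8$, which are Jacobi fields, together with the logarithmic-cutoff argument at the ends. If $\Ind(M)=1$, the negative eigenfunction $\phi$ imposes only one linear condition. From this we will deduce two things.

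First, for any admissible $\omega$, the seven-dimensional family $\{u_{\omega,\alpha}\}$ contains a six-dimensional subspace $L^2$-orthogonal to $\phi$, on which $\mathcal{Q}\ge0$. Combined with the trace inequality above, this forces $\mathcal{Q}(u_{\omega,\alpha},u_{\omega,\alpha})=0$ for all $\alpha$, and therefore $\omega$ is parallel along $M$ with the rigidity equality. Non-flatness then gives $\omega=0$, so $b_1=0$ and there is no extra harmonic form coming from ends beyond the first two.

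Second, when an end is not parallel to the others, or has multiplicity greater than one, the translation fields produce an additional harmonic form whose $\Lambda^2_7$ components are not killed. This yields a contradiction as before and gives $k=2$ with parallel regular ends.

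\textbf{Main obstacle.} The hard step is the admissibility and rigidity for the $\Lambda^2_7$ pairing. Concretely, we must check that the error terms involving $A(\omega^\sharp)\wedge\nu$ and $\omega^\sharp\wedge A(\cdot)$ cancel after projecting to $\Lambda^2_7$ and summing over $\alpha$. This uses $\operatorname{tr}A=0$ and the $\operatorname{Spin}(7)$-equivariance of $\pi_7$, in particular that $\pi_7$ restricted to $\nu^\perp\wedge\nu$ is conformal. We would first verify this identity in a frame adapted to $\Phi$ with $\nu=e_0$, using the standard octonion structure constants.
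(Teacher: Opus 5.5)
The use of $\Ind(M)=1$ in your plan does not work as stated. For one fixed $\omega$, suppose $\cQ\ge 0$ on a six-dimensional subspace of $\Span\{u_{\omega,\alpha}\}$ and $\sum_\alpha\cQ(u_{\omega,\alpha},u_{\omega,\alpha})\le 0$. These two facts are compatible and force nothing. The quadratic form $\alpha\mapsto\cQ(u_{\omega,\alpha},u_{\omega,\alpha})$ on $\Lambda^2_7$ need only have nonpositive trace and at most one negative eigenvalue, e.g.\ eigenvalues $(-6,1,\ldots,1)$.

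Your conclusion $\omega=0$ also fails on the catenoid itself. There $dx^8|_M$ is a nonzero, non-parallel $L^2$ harmonic $1$-form in the Chodosh--Gianocca class, and its test functions $\langle\nu\wedge e_8,\Theta_a\rangle$ are translation Jacobi fields. So equality in the trace identity cannot force $\omega=0$.

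What the paper does instead is linear algebra over the \emph{space of forms}. The single bound state $\varphi$ defines a linear map $F(\omega)=(\cQ(\widetilde u^\omega_1,\varphi),\ldots,\cQ(\widetilde u^\omega_7,\varphi))\in\R^7$, using the corrected test functions described below. This imposes seven linear conditions on $\omega$. On $\ker F$:
\begin{itemize}
\item the index-one decomposition gives $\cQ(\widetilde u_a^\omega,\widetilde u_a^\omega)\ge0$ for each $a$;
\item the trace identity then forces every term to vanish, hence $L\widetilde u_a^\omega=0$;
\item the nullity argument of \cite[Section~8]{CG}, run on a single end after subtracting $d\langle q_\alpha,x\rangle$, gives $\omega\in\Span\{dx^1,\ldots,dx^8\}$, not $\omega=0$.
\end{itemize}
So the relevant space of forms has dimension at most $8+7=15$.

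The second missing piece is the matching lower bound. This is exactly where the isomorphism $R_\nu(q)=P_7(\nu\wedge q)$, $\nu^\perp\to\Lambda^2_7$, enters. Its conformal factor is $\tfrac12$, not $\sqrt{7/8}$: for example $P_7e^{12}=\tfrac14\Theta_1$ has norm $\tfrac12$. Decaying forms alone give only $b_1(M)+k-1\le 8$, which classifies nothing.

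With $V=\Span\{\nu_\alpha\}$ and $d=\dim V$, the paper constructs, for each $S\in\operatorname{Hom}(V,\Lambda^2_7)$, a harmonic form with $\omega^\sharp\to q_\alpha^\top$ on $E_\alpha$, where $q_\alpha=R_{\nu_\alpha}^{-1}(S\nu_\alpha)$. On every end, $u_a^\omega$ then has the same limit as the translation Jacobi field $t_a=\langle\nu,S^*\Theta_a\rangle$. Hence $\widetilde u_a^\omega=u_a^\omega-t_a\in\cB$, and since translations carry zero flux, $\sum_a\cQ(\widetilde u_a^\omega,\widetilde u_a^\omega)=0$. These forms give $\dim\ge b_1(M)+k-1+7d$. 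For $d\ge 2$ this yields $b_1(M)+k\le 2$, so $k=2$ and Schoen's two-end theorem finishes. Your sentence that translation fields \emph{produce an additional harmonic form} points toward this, but it neither builds these non-decaying forms nor uses them in a dimension count.

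Three further points:
\begin{itemize}
\item The parallel case $d=1$ is out of reach of the fixed pairing, since the count only gives $b_1(M)+k\le 9$. The paper falls back on \cite[Theorem~1.1]{CG}, with the dilation field and varying balanced pairings.
\item Your stated main obstacle is not where the difficulty lies. Since $P_\Phi=\Id+\star(\Phi\wedge\cdot)=4P_7\ge0$, \cite[Lemma~7.1]{CG} applies verbatim with $\Omega=\Phi$.
\item The cross terms there do not cancel pointwise. They assemble into the exact term $d(\Phi_M\wedge\omega\wedge A\omega)$, whose boundary contributions must be shown to vanish using the decay on the ends.
\end{itemize}
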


In addition to the result of \cite{CG} in $\mathbb R^4$,
there are classical index-one classification results in $\mathbb R^3$.
 Cheng--Tysk
\cite{CT88} proved the catenoid characterization in $\mathbb R^3$
under the assumption that the ends are embedded, while López--Ros
\cite{LR89} showed that the catenoid and Enneper's surface are the only
complete two-sided minimal immersions in $\mathbb R^3$ of index one.

Compared to \cite{CG}, here we need to assume finite total curvature since the relation between finite index and finite total curvature depends sharply on the ambient dimension. Finite total curvature implies finite index in every dimension \cite{FC85,Tys89}. Conversely, finite index implies finite total curvature for complete two-sided minimal immersions when \(3\leq N\leq6\): \cite{FC85,Gul86,CL24,CL23,CMR24,CLMS24,Maz24}. However, this fails in every ambient dimension \(N\geq8\). Simons exhibited the relevant stable cone in \(\mathbb{R}^8\) \cite{Sim67}, Bombieri--De Giorgi--Giusti proved that the Simons cone is area-minimizing \cite{BDGG69}, and the Hardt--Simon theory produces smooth complete area-minimizing hypersurfaces asymptotic to such cones \cite{HS85}. Taking Euclidean products in the remaining dimensions, one obtains, for every \(N\geq8\), a complete two-sided minimal hypersurface \(M^{N-1}\subset\mathbb{R}^N\) with \(\Ind(M)=0\) but \(\int_M|A|^{N-1}=\infty\). Thus the finite-total-curvature assumptions in our two ambient-eight results are genuinely necessary. The remaining case \(N=7\) is open in general.

Finally, we briefly describe the proof strategy. \cref{thm:main} combines the proof strategy of both \cite{CG} and \cite{Li}. Compared to \cite[Theorem1.1]{CG} we introduce more than one bound state in order to obtain an index estimate. More precisely, we first follow \cite{Li} to investigate into all bound states but one by a rough estimate (by taking the admissible pairings to be the whole orthonormal basis of $\Lambda^2\mathbb{R}^N$) and then mimic the proof of \cite[theorem 1.1]{CG} to handle the remaining one. For the \(\operatorname{Spin}(7)\) results instead use a fixed rank-seven pairing, in direct analogy with the self-dual rank-three pairing and translation-field argument in \cite[Section~10]{CG}.

\paragraph{Organization.}
In \cref{sec:finite-index-decomposition} we establish the finite-index spectral decomposition used throughout the paper. In \cref{sec:CG-inputs} we collect, in the notation needed here, the specific results from Sections~4--8 of Chodosh--Gianocca~\cite{CG} and prove their finite-index orthogonality consequence. We prove \cref{thm:main} in \cref{sec:main-proof}. Finally, \cref{sec:spin7} describes the \(\operatorname{Spin}(7)\)-decomposition of two-forms, proves the improved index estimate \cref{thm:spin7-index-bound}, and proves the immersed index-one result \cref{thm:spin7-index-one}.

\paragraph{Use of AI.}
The ideas that \cite{CG}'s method could be applied to get such an index estimate and to use the \(\operatorname{Spin}(7)\)-decomposition were suggested to the author by a large language model. The author directed this use, selected and substantially revised any generated text, and independently verified the mathematical statements and proofs. The author takes full responsibility for the final content of the paper.

\paragraph{Acknowledgements.}
The author is very grateful to Qiongling Li for bringing this question
to the author's attention and for suggesting and greatly assisting with
revisions to an earlier draft.  The author would also like to thank
Otis Chodosh for his interest in this work and for helpful comments and
suggestions, and Yiyang Xiao and Zunpeng Zhou for helpful discussions.

\section{The finite-index decomposition}\label{sec:finite-index-decomposition}

We follow exactly the sign conventions of~\cite[Sections~2.0.3 and 3.2]{CG}:
\[
 I:=Ind(M),
 \qquad
 L:=\Delta+|A|^2,
 \qquad
 \cQ_R(u,v)
 :=\int_{M\cap B_R}
 \bigl(\langle\nabla u,\nabla v\rangle-|A|^2uv\bigr)\,d\mu,
\]
and
\[
 \cQ_\infty(u,v):=\lim_{R\to\infty}\cQ_R(u,v)
\]
whenever the limit exists.  When both arguments have finite energy, we
write simply
\[
 \cQ(u,v)
 =\int_M
 \bigl(\langle\nabla u,\nabla v\rangle-|A|^2uv\bigr)\,d\mu.
\]
Set
\[
 \|f\|_{\cB}^2
 :=\int_M\bigl(|\nabla f|^2+|A|^2f^2\bigr)\,d\mu,
\]
and let $\cB\subset W^{1,2}_{\mathrm{loc}}(M)$ be the Hilbert-space
completion of $C_c^\infty(M)$ in this norm.

By the spectral result recalled in~\cite[
Proposition~2.5]{Li}, there are $I$ $L^2$-orthonormal bound states
\[
 \varphi_1,\ldots,\varphi_I
 \in W^{1,2}(M)\cap C^\infty_{\mathrm{loc}}(M)\subset\cB
\]
and numbers $\lambda_1,\ldots,\lambda_I<0$ such that
\begin{equation}\label{eq:eigenstates}
 L\varphi_i+\lambda_i\varphi_i=0,
 \qquad
 \|\varphi_i\|_{L^2(M)}=1.
\end{equation}
In particular,
\begin{equation}\label{eq:Q-eigenstates}
 \cQ(\varphi_i,\varphi_j)=\lambda_i\delta_{ij}\leq 0.
\end{equation}
The next lemma is the finite-index replacement for
\cite[Lemma~3.7]{CG}.

\begin{lemma}\label{lem:index-decomposition}
Suppose that $f\in\cB$ satisfies
\begin{equation}\label{eq:Q-orthogonality}
 \cQ(f,\varphi_i)=0,
 \qquad i=1,\ldots,I.
\end{equation}
Then
\[
 \cQ(f,f)\geq 0,
\]
with equality if and only if $Lf=0$.
\end{lemma}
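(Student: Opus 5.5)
We argue by contradiction on the sign of $\cQ$ on the augmented space, and then treat the equality case as a constrained minimization whose Euler--Lagrange equation we read off.

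\textbf{Step 1: nonnegativity.} Suppose $\cQ(f,f)<0$. Then $f\notin\Span\{\varphi_1,\ldots,\varphi_I\}$, since on that span $\cQ(\sum a_i\varphi_i,\sum a_i\varphi_i)=\sum\lambda_ia_i^2$. Set $V=\Span\{\varphi_1,\ldots,\varphi_I,f\}\subset\cB$, which has dimension $I+1$. For $v=\sum a_i\varphi_i+bf$, the hypothesis \eqref{eq:Q-orthogonality} together with \eqref{eq:Q-eigenstates} gives
\[
\cQ(v,v)=\sum_i\lambda_ia_i^2+b^2\cQ(f,f),
\]
which is negative unless $v=0$. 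So $\cQ$ is negative definite on an $(I+1)$-dimensional subspace of $\cB$.

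\textbf{Step 2: density.} Because $\cQ$ is continuous on $\cB$ (as $|\cQ(u,v)|\le\|u\|_\cB\|v\|_\cB$) and $C_c^\infty(M)$ is dense in $\cB$, we approximate a basis of $V$ by compactly supported smooth functions. Negative definiteness on a finite-dimensional space is an open condition, via the minimum of $\cQ$ on the unit sphere of $V$ in any norm. The perturbed span therefore stays $(I+1)$-dimensional and $\cQ$-negative definite inside $C_c^\infty(M)$. This contradicts \eqref{eq:morse-index}, so $\cQ(f,f)\ge0$.

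\textbf{Step 3: equality case.} Let
\[
W:=\{g\in\cB:\cQ(g,\varphi_i)=0 \text{ for all } i\}.
\]
Step 1 shows $\cQ\ge0$ on $W$. If $\cQ(f,f)=0$ with $f\in W$, the Cauchy--Schwarz inequality for the positive semidefinite form $\cQ|_W$ gives $\cQ(f,g)=0$ for all $g\in W$. Now take any $\psi\in C_c^\infty(M)$ and project it as
\[
g=\psi-\sum_j\frac{\cQ(\psi,\varphi_j)}{\lambda_j}\varphi_j .
\]
This lies in $W$ because $\lambda_j\neq0$. Hence
\[
\cQ(f,\psi)=\sum_j c_j\cQ(f,\varphi_j)=0.
\]
Thus $\int\langle\nabla f,\nabla\psi\rangle-|A|^2f\psi=0$ for all test functions, i.e.\ $Lf=0$ weakly, and by elliptic regularity $f$ is smooth with $Lf=0$.

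Conversely, if $Lf=0$ with $f\in\cB$, then $\cQ(f,\psi)=0$ for all $\psi\in C_c^\infty$. By continuity this extends to all $\psi\in\cB$, so taking $\psi=f$ gives $\cQ(f,f)=0$.

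\textbf{Main obstacle.} The delicate point is the functional-analytic bookkeeping in $\cB$. We need $\cQ$ to be bounded on $\cB$, which holds since both terms are controlled by $\|\cdot\|_\cB$. We also need the $\varphi_i$ to lie in $\cB$ and the identity $\cQ(\varphi_i,\varphi_j)=\lambda_i\delta_{ij}$; both are supplied by \eqref{eq:Q-eigenstates}. With these in hand, both the density argument and the weak Euler--Lagrange equation go through.
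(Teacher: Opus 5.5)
Your proposal is correct and follows essentially the same route as the paper. Nonnegativity comes from negative definiteness on $\Span\{\varphi_1,\ldots,\varphi_I,f\}$ plus density, and the equality case uses the same projection $\psi\mapsto\psi-\sum_j\lambda_j^{-1}\cQ(\psi,\varphi_j)\varphi_j$; your Cauchy--Schwarz step on the semidefinite form $\cQ|_W$ repackages the paper's observation that $2t\cQ(f,\widehat h)+t^2\cQ(\widehat h,\widehat h)\geq0$ for all $t$ forces $\cQ(f,\widehat h)=0$, and your Step~2 usefully spells out the perturbation argument that the paper compresses into the one-line claim that $\cQ$ has index $I$ on $\cB$.
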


\begin{proof}
 Density of $C_c^\infty(M)$ in $\cB$  implies that $\cQ$
has index $I$ with respect to functions in $\cB$. The span
\[
 \mathcal N:=\Span\{\varphi_1,\ldots,\varphi_I\}
\]
is negative definite by \eqref{eq:Q-eigenstates}.  If
$\cQ(f,f)<0$, then \eqref{eq:Q-orthogonality} implies that $\cQ$
is negative definite on $\mathcal N\oplus\Span\{f\}$. Therefore
$\cQ(f,f)\geq0$.

Assume now that $\cQ(f,f)=0$.  Fix $h\in C_c^\infty(M)$ and define
\begin{equation}\label{eq:h-hat}
 \widehat h
 :=h-\sum_{i=1}^I
 \lambda_i^{-1}\cQ(h,\varphi_i)\varphi_i.
\end{equation}
Using \eqref{eq:Q-eigenstates}, for every $j=1,\ldots,I$ we obtain
\begin{align*}
 \cQ(\widehat h,\varphi_j)
 &=\cQ(h,\varphi_j)
   -\sum_{i=1}^I\lambda_i^{-1}
       \cQ(h,\varphi_i)\cQ(\varphi_i,\varphi_j)\\
 &=\cQ(h,\varphi_j)
   -\lambda_j^{-1}\cQ(h,\varphi_j)\lambda_j
 =0.
\end{align*}
Consequently, $f+t\widehat h$ is $\cQ$-orthogonal to every bound state $\varphi_j$
for all $t\in\R$.  The non-negativity just proved gives
\begin{align*}
 0
 &\leq \cQ(f+t\widehat h,f+t\widehat h)\\
 &=2t\cQ(f,\widehat h)+t^2\cQ(\widehat h,\widehat h),
 \qquad \forall t\in\R.
\end{align*}
The constant term is zero.  A real quadratic polynomial of this form
cannot be nonnegative for every $t$ unless its linear coefficient
vanishes.  Hence $\cQ(f,\widehat h)=0$.  By
\eqref{eq:Q-orthogonality} and \eqref{eq:h-hat},
\[
 \cQ(f,h)=\cQ(f,\widehat h)=0.
\]
Since this holds for every $h\in C_c^\infty(M)$, the definition of
$\cQ$ gives $Lf=0$ weakly.  Elliptic regularity makes $f$ a smooth
Jacobi field.

Conversely, suppose that $Lf=0$ weakly and $f\in\cB$.  Choose
$f_j\in C_c^\infty(M)$ with $f_j\to f$ in $\cB$.  Then
$\cQ(f,f_j)=0$ for every $j$.  Continuity of $\cQ$ on
$\cB\times\cB$ yields
\[
 \cQ(f,f)=\lim_{j\to\infty}\cQ(f,f_j)=0.
\]
This proves the equality characterization.
\end{proof}

\section{Inputs from Chodosh--Gianocca}\label{sec:CG-inputs}

We isolate only the results of \cite[Sections~4--8]{CG} needed below.
Their proofs use the regular end expansions, harmonic-form analysis,
and admissible-pairing identities, but not the index-one hypothesis.
The sole new statement in this section is
\cref{cor:finite-index-7.3}, which replaces orthogonality to one bound
state by orthogonality to all negative bound states.

\paragraph{Harmonic forms and dilation.}
Let $Z=\langle x,\nu\rangle$.  Then $LZ=0$ and
\begin{equation}\label{eq:Z-flux}
 \cQ_\infty(Z,Z)
 =(n-1)\int_M|\nabla x^N|^2\,d\mu>0
\end{equation}
by \cite[Corollary~4.2]{CG}.  As in~\cite[Section~3.1]{CG}, after a
rotation the ends $E_1,\ldots,E_k$ are outer graphs over the fixed
hyperplane
\[
 \Pi:=\{x^N=0\}.
\]
Let $\cH$ be the space in \cite[Definition~5.1]{CG}: its elements are
harmonic $1$-forms for
which some unique $q\in\Pi$ satisfies
\begin{equation}\label{eq:H-asymptotics}
 \nabla^\ell(\omega^\sharp-h_\alpha q^\top)
 =O(r^{1-n-\ell})
 \quad\text{on }E_\alpha.
\end{equation}
Writing $\cH_0$ for the subspace of $\cH$ with $q=0$ and
\begin{equation}\label{eq:E}
 \cE:=\Span\{dx^N\}\subset\cH_0,
\end{equation}
\cite[Lemma~5.2]{CG} gives
\begin{equation}\label{eq:H-dimension}
 \dim\cH_0\geq b_1(M)+k-1,
 \qquad
 \dim\cH\geq b_1(M)+k+n-1.
\end{equation}

\paragraph{Admissible pairings.}
For $\Omega\in\Lambda^{N-4}\R^N$, set
\begin{equation}\label{eq:P-Omega}
 K_\Omega\eta=\star(\Omega\wedge\eta),
 \qquad P_\Omega=\Id+K_\Omega.
\end{equation}
If $P_\Omega\geq0$, let $r=\operatorname{rank}P_\Omega$ and choose $\Theta_1,\ldots,\Theta_r\in\Lambda^2\R^N$ such that
\begin{equation}\label{eq:admissible-factorization}
 P_\Omega=\sum_{a=1}^r\Theta_a\otimes\Theta_a.
\end{equation}
This is called an admissible pairing set, and by \cite[Lemma~6.1]{CG}
\begin{equation}\label{eq:decomposable}
 \langle P_\Omega\eta,\eta\rangle=|\eta|^2
 \quad\text{for every decomposable }\eta.
\end{equation}
\begin{definition}
Let
$\eta\in \Lambda^2\mathbb{R}^N\setminus\{0\}.$
By the canonical form for skew-symmetric operators, there exist pairwise
orthogonal oriented two-planes
$
U_1,\ldots,U_m\subset\mathbb{R}^N
$
and numbers
$
\lambda_1\ge \lambda_2\ge\cdots\ge\lambda_m>0
$
such that
$
\eta=\lambda_1\tau_1+\cdots+\lambda_m\tau_m,
$
where $\tau_i$ denotes the unit volume form of $U_i$.
We say that
$\eta$ is \emph{balanced} if
$
\lambda_1\le \lambda_2+\cdots+\lambda_m.
$
\end{definition}
We also use the following two algebraic
inputs without repeating their proofs.

\begin{proposition}[\cite{CG}, Proposition~6.7]\label{prop:balanced-existence}
Every $N$-dimensional subspace of $\Lambda^2\R^N$, $N\geq4$, contains
a nonzero balanced $2$-form.
\end{proposition}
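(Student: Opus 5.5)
The plan is to argue by contradiction using equivariant topology: if an $N$-dimensional subspace $V\subset\Lambda^2\R^N$ had no nonzero balanced form, we would build an antipodally equivariant map out of the sphere $S(V)\cong S^{N-1}$ that cannot exist. The key observation is that a non-balanced form has a canonical leading oriented $2$-plane. Indeed, if $\lambda_1>\lambda_2+\cdots+\lambda_m$, then in particular $\lambda_1>\lambda_2$, so the top eigenvalue of the skew operator $\eta$ is simple. Its eigenplane $U_1(\eta)$, with the orientation given by $\eta$, is then uniquely determined.

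\textbf{Step 1 (continuity).} By perturbation theory for the symmetric operator $-\eta^2$, whose top eigenvalue $\lambda_1^2$ is simple, the map
\[
 \Phi\colon S(V)\to\widetilde G_2(\R^N),\qquad \eta\mapsto\tau_1(\eta),
\]
is continuous. Here $\widetilde G_2(\R^N)$ is the oriented Grassmannian, viewed as the unit decomposable $2$-forms.

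\textbf{Step 2 (equivariance).} Replacing $\eta$ by $-\eta$ reverses the orientation of every $U_i$, so $\Phi(-\eta)=-\Phi(\eta)$. Thus $\Phi$ intertwines the antipodal map on $S^{N-1}$ with the free orientation-reversing involution on $\widetilde G_2(\R^N)$. Passing to quotients gives a map $\bar\Phi\colon\R P^{N-1}\to G_2(\R^N)$. This map pulls back the double cover $\widetilde G_2\to G_2$, which is classified by $w_1(\gamma_2)$, to the antipodal cover $S^{N-1}\to\R P^{N-1}$, which is classified by the generator $x$. Hence $\bar\Phi^*w_1(\gamma_2)=x$ and
\[
 \bar\Phi^*w_1(\gamma_2)^{N-1}=x^{N-1}\neq0 .
\]
Step 3 aims to show that $w_1(\gamma_2)^{N-1}$ vanishes, or at least that its pullback does, which contradicts this.

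\textbf{Step 3 (the obstruction, and the main obstacle).} For $N=4$ the contradiction is immediate, since the height of $w_1$ on $G_2(\R^4)$ is $2$. For general $N$, however, the height of $w_1(\gamma_2)$ on $G_2(\R^N)$ can exceed $N-2$; by Stong's computation this already happens for $N=5$. So the bare Grassmannian map is not enough, and I expect this step to be the main difficulty.

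To repair it, I would use the extra structure in $\Phi$: by construction $\langle\eta,\Phi(\eta)\rangle=\lambda_1>0$. Consider the $\cQ$-free linear data $\pi_V\colon\Lambda^2\R^N\to V$, the orthogonal projection.
\begin{itemize}
\item The composite $\pi_V\circ\Phi$ is nowhere zero and is homotopic, through odd maps, to the identity of $S(V)$.
\item Via $\Phi$, the tautological plane bundle pulls back to an odd-twisted rank-$2$ bundle $E\to\R P^{N-1}$, with $E\otimes\varepsilon\cong\bar\Phi^*\gamma_2$, equipped with a nonvanishing section of $\mathrm{Hom}(\Lambda^2E,V\otimes\varepsilon)$.
\end{itemize}
I would try to convert this into the vanishing of a top Stiefel--Whitney class, for instance $w_{N-1}$ of $(V\otimes\varepsilon)\big/\text{(image line)}$, and compare it with $x^{N-1}\neq0$ to get the contradiction. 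The bookkeeping of which twisted bundle carries a nowhere-zero section is exactly where I expect the argument to need care.

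If this cohomological route stalls, the fallback is more direct. Non-balanced forms deformation retract onto decomposable ones by shrinking $\lambda_2,\dots,\lambda_m$ to $0$. So one can instead try to show directly that $V$ must meet the complement of the open cone over $\widetilde G_2$ by a degree count for the odd map $\pi_V\circ\Phi$ restricted to suitable subspheres.
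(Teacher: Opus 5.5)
Your Steps~1 and~2 are correct, but Step~3 is never carried out, and for $N\geq5$ it is the entire content of the statement. The ``repair'' through $\pi_V$ and the ``fallback'' degree count describe what you hope to prove; they are not arguments. As written, the proposal therefore only settles $N=4$.

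Your reason for abandoning the Grassmannian route is also a misdiagnosis. That $w_1(\gamma_2)^{N-1}\neq0$ on $G_2(\R^N)$ for some $N$ (e.g.\ $N=5$) only shows that this one monomial gives no obstruction. You never used $w_2(\gamma_2)$, Wu's formula, or the relation $\bar w_{N-1}(\gamma_2)=w_{N-1}(\gamma_2^\perp)=0$, which holds because $\gamma_2^\perp$ has rank $N-2$. Together these rule out $\bar\Phi$ for every $N\geq4$. For instance, when $N=5$ the relation $\bar w_4=w_1^4+w_1^2w_2+w_2^2=0$ pulls back to $x^4=0$ whether $\bar\Phi^*w_2$ is $0$ or $x^2$.

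Here is the missing step in general. Let $\ell$ be the tautological line bundle over $\R P(V)\cong\R P^{N-1}$, so $w_1(\ell)=x$. Let $E:=\bar\Phi^*\gamma_2$ be the bundle of top planes $U_1(\eta)$. It is a rank-two subbundle of the trivial $\R^N$-bundle, with complement $E^\perp$ of rank $N-2$.

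1. The map $t\eta\mapsto t\,\eta|_{U_1(\eta)}=t\lambda_1\tau_1$ is a bundle isomorphism $\ell\cong\Lambda^2E$, so $w_1(E)=x$, as you found.
2. Wu's formula $\mathrm{Sq}^1w_2=w_1w_2+w_3$, together with $w_3(E)=0$, gives $\mathrm{Sq}^1w_2(E)=x\,w_2(E)$.
3. Here $w_2(E)\in\{0,x^2\}$, $\mathrm{Sq}^1(x^2)=2x^3=0$, and $x^3\neq0$ because $N-1\geq3$. Hence $w_2(E)=0$ and $w(E)=1+x$.
4. Whitney duality then gives
\[
 w(E^\perp)=(1+x)^{-1}=1+x+\cdots+x^{N-1},
\]
so $w_{N-1}(E^\perp)=x^{N-1}\neq0$ for a bundle of rank $N-2$. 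This is a contradiction.

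This needs neither $\pi_V$ nor the positivity $\langle\eta,\Phi(\eta)\rangle>0$. In fact only $\lambda_1>\lambda_2$ was used. So the argument shows that every $N$-dimensional $V$ contains a nonzero $\eta$ with $\lambda_1=\lambda_2$, and such an $\eta$ is trivially balanced.
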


\begin{proposition}[\cite{CG}, Proposition~6.8]\label{prop:balanced-pairing}
For every nonzero balanced $2$-form $\eta$, there is
$\Omega\in\Lambda^{N-4}\R^N$ such that $P_\Omega\geq0$ and
$P_\Omega\eta=0$.
\end{proposition}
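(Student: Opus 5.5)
The plan is to build $\Omega$ explicitly in an orthonormal frame adapted to $\eta$. Then $K_\Omega$, and hence $P_\Omega$, splits into small blocks that can be checked by hand. Write $\eta=\lambda_1\tau_1+\cdots+\lambda_m\tau_m$ as in the definition. Choose an oriented orthonormal basis $e_1,\ldots,e_N$ with $\tau_i=e_{2i-1}\wedge e_{2i}$, and let $W$ be the span of the remaining $N-2m$ directions. Balancedness forces $m\geq 2$, since for $m=1$ the condition $\lambda_1\le 0$ is impossible. I would look for $\Omega$ of the form
\[
\Omega=\sum_{1\le i<j\le m} c_{ij}\,\star(\tau_i\wedge\tau_j),\qquad c_{ij}\in\R,
\]
and then determine how $K_\Omega$ acts on the standard basis of $\Lambda^2\R^N$.

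\textbf{Step 1: block structure.} The form $\star(\tau_i\wedge\tau_j)\wedge e_a\wedge e_b$ is nonzero only if $e_a\wedge e_b$ is complementary to $\star(\tau_i\wedge\tau_j)$, that is, $\{a,b\}\subset U_i\cup U_j$. This gives three kinds of basis elements.
\begin{itemize}
\item If $e_a\wedge e_b=\tau_i$, the image is $\pm\tau_j$. With a consistent sign convention (absorbed into the $c_{ij}$), $K_\Omega\tau_i=\sum_{j\neq i}c_{ij}\tau_j$. So on $\Span\{\tau_1,\ldots,\tau_m\}$ we get $P_\Omega=\Id+C$, where $C$ is the symmetric matrix with zero diagonal and off-diagonal entries $c_{ij}$.
\item If $a\in U_i$ and $b\in U_j$ are mixed, $K_\Omega$ maps $e_a\wedge e_b$ to $\pm c_{ij}\,e_{a'}\wedge e_{b'}$, where $a',b'$ are the complementary indices in $U_i,U_j$. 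This yields $2\times2$ blocks with eigenvalues $\pm c_{ij}$, so $P_\Omega\ge0$ there exactly when $|c_{ij}|\le1$.
\item If a basis $2$-form involves a direction in $W$, or lies inside one $U_i$ but is not $\tau_i$ (impossible, since each $U_i$ is two-dimensional), it is killed by $K_\Omega$. There $P_\Omega=\Id$.
\end{itemize}

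\textbf{Step 2: reduction to a correlation matrix.} By Step 1, it suffices to find a symmetric matrix $G=\Id+C$ with unit diagonal that satisfies $G\ge0$ and $G\lambda=0$, where $\lambda=(\lambda_1,\ldots,\lambda_m)$. Its off-diagonal entries then automatically satisfy $|c_{ij}|\le1$ by positivity of $2\times2$ minors. Take $G$ to be the Gram matrix $G_{ij}=\langle u_i,u_j\rangle$ of unit vectors $u_1,\ldots,u_m\in\R^2$. Then $G\ge0$ and has unit diagonal. Moreover $(G\lambda)_i=\langle u_i,\sum_j\lambda_ju_j\rangle$, so it suffices that $\sum_j\lambda_ju_j=0$. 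This asks for a closed planar polygon with side lengths $\lambda_1,\ldots,\lambda_m$. Such a polygon exists precisely when the largest side is at most the sum of the others, which is the balanced condition $\lambda_1\le\lambda_2+\cdots+\lambda_m$. The degenerate case $\lambda_1=\sum_{j\ge2}\lambda_j$ is allowed, taking $u_1=-u_j$ for all $j\ge2$. The existence itself follows from a continuity argument: rotate the vectors $u_j$, $j\ge2$, from all aligned to a configuration where their sum has small length. Hence $P_\Omega\ge0$ and $P_\Omega\eta=\sum_i(G\lambda)_i\tau_i=0$.

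\textbf{Main obstacle.} The only delicate point is the sign bookkeeping in Step 1. One must check that, after fixing the orientation conventions for $\star$ and choosing signs of the coefficients of $\Omega$, the action on the $\tau$'s is exactly $\tau_i\mapsto c_{ij}\tau_j$ with a symmetric $C$. A wrong sign would replace $C$ by a non-symmetric or sign-twisted matrix. One must also check that the mixed $2\times2$ blocks really have eigenvalues $\pm c_{ij}$, which they do because $K_\Omega$ is self-adjoint. I would first verify the case $N=4$, $m=2$. There $\Omega=c$ is a scalar, $K_\Omega=c\star$, and the construction reduces to choosing $c=\pm1$ so that $\lambda(\tau_1+\tau_2)$ is (anti-)self-dual. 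This case fixes the sign convention, and the general case follows block by block, since each $\star(\tau_i\wedge\tau_j)$ only involves the four-dimensional space $U_i\oplus U_j$.
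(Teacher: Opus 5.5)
Your proposal is correct and gives a complete proof. The paper does not prove this statement: it quotes it from Chodosh--Gianocca as one of two algebraic inputs used ``without repeating their proofs,'' so there is no argument in the text to compare your route with. Your argument supplies a self-contained verification. It takes $\Omega=\sum_{i<j}c_{ij}\star(\tau_i\wedge\tau_j)$, splits $P_\Omega$ into blocks, and reduces the problem to a unit-diagonal $G\geq0$ with $G\lambda=0$, realized as the Gram matrix of a closed planar polygon. It also shows why the hypothesis is the right one. Any $G\geq0$ with unit diagonal is the Gram matrix of unit vectors $u_i$, and $G\lambda=0$ gives $\bigl|\sum_i\lambda_iu_i\bigr|^2=\lambda^{T}G\lambda=0$, hence $\lambda_1\leq\lambda_2+\cdots+\lambda_m$. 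So within your ansatz, balancedness is necessary as well as sufficient.

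Two points can be tightened. First, the sign bookkeeping you single out is not actually delicate. Let $V=U_i\oplus U_j$, oriented by $\tau_i\wedge\tau_j$. Testing against $\alpha\wedge\star\beta=\langle\alpha,\beta\rangle\,\mathrm{vol}$ shows that $K_{\star(\tau_i\wedge\tau_j)}$ equals the four-dimensional Hodge star $\star_V$ on $\Lambda^2V$ and vanishes on $(\Lambda^2V)^\perp$. This holds whatever the ambient orientation, since reversing it flips both stars; that matters when $N=2m$ and your frame may be negatively oriented. Since $\star_V\tau_i=\tau_j$ and $\star_V\tau_j=\tau_i$, you get $K_\Omega\tau_i=\sum_{j\neq i}c_{ij}\tau_j$ exactly, with nothing to absorb. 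For $i<j$, $\star_V$ exchanges $e_{2i-1}\wedge e_{2j-1}$ with $-e_{2i}\wedge e_{2j}$, and $e_{2i-1}\wedge e_{2j}$ with $e_{2i}\wedge e_{2j-1}$. These are your $2\times2$ blocks, with eigenvalues $\pm c_{ij}$.

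Second, the continuity argument for the polygon needs an explicit configuration of $u_2,\ldots,u_m$ whose weighted sum has length at most $\lambda_1$, and you leave it unspecified. Fix a unit vector $e\in\R^2$ and choose signs $\epsilon_j\in\{\pm1\}$ for $j\geq2$ greedily, giving each new term the sign opposite to the current partial sum. Because $\lambda_j\leq\lambda_2$ for $j\geq3$, all partial sums stay in $[-\lambda_2,\lambda_2]$, so $\bigl|\sum_{j\geq2}\epsilon_j\lambda_j\bigr|\leq\lambda_2\leq\lambda_1$. Now consider the function $(u_2,\ldots,u_m)\mapsto\bigl|\sum_{j\geq2}\lambda_ju_j\bigr|$ on the connected torus $(S^1)^{m-1}$. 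It is at most $\lambda_1$ at $u_j=\epsilon_je$ and at least $\lambda_1$ at $u_j=e$, so it equals $\lambda_1$ somewhere. Setting $u_1:=-\lambda_1^{-1}\sum_{j\geq2}\lambda_ju_j$ then closes the polygon.
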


\paragraph{Test functions and nullity.}
For a harmonic $1$-form $\omega$, define
\begin{equation}\label{eq:test-functions}
 u_a^\omega=\langle\nu\wedge\omega^\sharp,\Theta_a\rangle.
\end{equation}
Lemma~7.1 of \cite{CG} gives the pointwise identity
\begin{equation}\label{eq:trace-identity}
 \sum_{a=1}^r
 \bigl(|\nabla u_a^\omega|^2-|A|^2(u_a^\omega)^2\bigr)\,d\mu
 =\frac12\Delta|\omega|^2\,d\mu
  +2(-1)^n d(\Omega_M\wedge\omega\wedge A\omega).
\end{equation}
For $\omega\in\cH$ with asymptotic vector $q$, put
\begin{equation}\label{eq:c-a}
 c_a(q)=\langle e_N\wedge q,\Theta_a\rangle,
 \qquad
 \widetilde u_a^\omega=u_a^\omega-c_a(q)Z.
\end{equation}
Then $\widetilde u_a^\omega\in\cB$ and
\begin{equation}\label{eq:prop-7.2}
 \sum_{a=1}^r
 \cQ(\widetilde u_a^\omega,\widetilde u_a^\omega)
 =-|q|^2(n-1)\int_M|\nabla x^N|^2\,d\mu\leq0
\end{equation}
by \cite[Proposition~7.2]{CG}.

\begin{corollary}\label{cor:finite-index-7.3}
For every $a$ and $i$, the limit
$\cQ_\infty(u_a^\omega,\varphi_i)$ exists.  If
\begin{equation}\label{eq:all-orthogonality}
 \cQ_\infty(u_a^\omega,\varphi_i)=0
 \quad\text{for all }a=1,\ldots,r,
 \quad i=1,\ldots,I,
\end{equation}
then $\omega\in\cH_0$ and $Lu_a^\omega=0$ for every $a$.
\end{corollary}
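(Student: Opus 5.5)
Two things have to be shown: the limits $\cQ_\infty(u_a^\omega,\varphi_i)$ exist, and under \eqref{eq:all-orthogonality} we have $q=0$ and each $u_a^\omega$ is a Jacobi field. The plan is to transfer the orthogonality from $u_a^\omega$ to the modified functions $\widetilde u_a^\omega\in\cB$ and then feed these into \cref{lem:index-decomposition} and \eqref{eq:prop-7.2}.

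For existence, write $u_a^\omega=\widetilde u_a^\omega+c_a(q)Z$. Since $\widetilde u_a^\omega,\varphi_i\in\cB$, the term $\cQ_R(\widetilde u_a^\omega,\varphi_i)$ converges to $\cQ(\widetilde u_a^\omega,\varphi_i)$ by dominated convergence. For the other term, integrate by parts on $M\cap B_R$ using $LZ=0$:
\[
 \cQ_R(Z,\varphi_i)=\int_{\partial(M\cap B_R)}\varphi_i\,\partial_\eta Z .
\]
The bound state satisfies $L\varphi_i=-\lambda_i\varphi_i$ with $\lambda_i<0$. On the regular ends $|A|^2=O(r^{-2n})$, so $\varphi_i$ decays exponentially, like $e^{-\sqrt{-\lambda_i}\,r}$. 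This follows from standard Agmon-type estimates, or one can cite the decay used in \cite{CG} or \cite{Li} for bound states. Meanwhile $Z$ and $\nabla Z$ grow at most polynomially on the ends, since $Z=O(r^{2-n})$ plus bounded terms. So the boundary term tends to $0$, and $\cQ_\infty(Z,\varphi_i)=0$. Hence $\cQ_\infty(u_a^\omega,\varphi_i)$ exists and equals $\cQ(\widetilde u_a^\omega,\varphi_i)$. Establishing this decay rigorously is the main technical point. If exponential decay is not available directly, the fallback is to integrate by parts against $\varphi_i$ instead: $\varphi_i\in W^{1,2}$ and $LZ=0$ smoothly, so one needs a sequence $R_j\to\infty$ along which $\int_{\partial B_{R_j}}(|\varphi_i|^2+|\nabla\varphi_i|^2)$ is small relative to the growth of $Z$. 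Since $|\nabla Z|$ is bounded and $Z=O(1)$ up to the linear part, the boundary integral is $\le C R^{n-1}\int_{\partial B_R}|\varphi_i|$, and exponential decay is genuinely needed there.

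Under \eqref{eq:all-orthogonality}, each $\widetilde u_a^\omega\in\cB$ is $\cQ$-orthogonal to all $\varphi_i$. \cref{lem:index-decomposition} then gives $\cQ(\widetilde u_a^\omega,\widetilde u_a^\omega)\ge0$ for every $a$. Summing over $a$ and comparing with \eqref{eq:prop-7.2}, the sum is both $\ge0$ and $\le0$. Since $\int_M|\nabla x^N|^2>0$ by \eqref{eq:Z-flux} and $n\ge3$, this forces $q=0$, i.e.\ $\omega\in\cH_0$. It also forces each $\cQ(\widetilde u_a^\omega,\widetilde u_a^\omega)=0$, being nonnegative terms with zero sum. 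The equality case of \cref{lem:index-decomposition} then yields $L\widetilde u_a^\omega=0$. Because $q=0$ we have $c_a(q)=0$ and $\widetilde u_a^\omega=u_a^\omega$, so $Lu_a^\omega=0$ for every $a$. (Even without using $q=0$, the conclusion would follow from $LZ=0$.)
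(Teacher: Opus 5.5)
Your proposal is correct and is essentially the paper's argument. The paper also reduces everything to the identity $\cQ_\infty(u_a^\omega,\varphi_i)=\cQ(\widetilde u_a^\omega,\varphi_i)$, which it imports from the proof of \cite[Corollary~7.3]{CG} whereas you derive it from $\cQ_\infty(Z,\varphi_i)=0$, and then combines \cref{lem:index-decomposition} with \eqref{eq:prop-7.2} and \eqref{eq:Z-flux} to get $q=0$ and $Lu_a^\omega=0$ exactly as you do.

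One correction to your ``fallback'' remark: exponential decay is not actually needed. On regular ends $Z$ is bounded and $|\nabla Z|\le|A|\,|x|=O(r^{1-n})$, so $\nabla Z\in L^2$ for $n\geq3$ and $\lim_R\cQ_R(Z,\varphi_i)$ exists. The boundary term $\int_{M\cap\partial B_R}\varphi_i\,\partial_\eta Z$ is bounded by $CR^{(1-n)/2}\bigl(\int_{M\cap\partial B_R}\varphi_i^2\bigr)^{1/2}$, which tends to $0$ along a sequence $R_j\to\infty$ merely because $\varphi_i\in L^2$.
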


\begin{proof}
Fix $i\in\{1,\ldots,I\}$. The proof of \cite[Corollary~7.3]{CG} does not rely on the choice of bound states, thus the proof is identical. The required limits exist
and that
\begin{equation}\label{eq:limit-equals-Q}
 \cQ_\infty(u_a^\omega,\varphi_i)
 =\cQ(\widetilde u_a^\omega,\varphi_i).
\end{equation}
Under \eqref{eq:all-orthogonality},
\cref{lem:index-decomposition} makes every summand on the left of
\eqref{eq:prop-7.2} nonnegative.  Equation
\eqref{eq:Z-flux} forces $q=0$, and then equality in
\cref{lem:index-decomposition} gives $Lu_a^\omega=0$ for every $a$.
\end{proof}

Finally, for
\begin{equation}\label{eq:K-Omega}
 \cK_\Omega
 :=\{\omega\in\cH_0:Lu_a^\omega=0
       \text{ for all }a=1,\ldots,r\},
\end{equation}
the nullity characterization \cite[Proposition~8.1]{CG} is
\begin{equation}\label{eq:prop-8.1}
 \cK_\Omega=\Span\{dx^N\}=\cE.
\end{equation}
Its proof contains no spectral or index-one input, so it applies under
the present finite-index hypotheses.

\section{Proof of \cref{thm:main}}\label{sec:main-proof}
Assume that $N=n+1\geq 4$ and that $M^n\subset\R^N$ is a
complete, connected, embedded, non-flat minimal hypersurface with finite total curvature and
$\operatorname{Ind}(M) = I$. Recall that if $M$ is stable, i.e. $I=0$, then it must be flat by  \cite{SZ98}. As such we assume $I\geq 1$.

For $i=1,\ldots,I$, define the linear map
\begin{equation}\label{eq:T-i}
 \mathcal T_i:\cH\longrightarrow\Lambda^2\R^N
\end{equation}
by the Riesz representation theorem
\begin{equation}\label{eq:T-i-definition}
 \langle\mathcal T_i(\omega),\Theta\rangle
 :=\cQ_\infty
 \bigl(\langle\nu\wedge\omega^\sharp,\Theta\rangle,
       \varphi_i\bigr),
 \qquad \Theta\in\Lambda^2\R^N.
\end{equation}
The existence of the limit for every $\Theta$ follows from the first
part of the proof of \cref{cor:finite-index-7.3}: take the admissible
pairing $P_0=\Id$ and factor it using an orthonormal basis of
$\Lambda^2\R^N$.

Set
\begin{equation}\label{eq:D}
 D:=\dim\Lambda^2\R^N=\binom N2
\end{equation}
and retain only the first $I-1$ maps at the initial linear-algebra
stage:
\begin{equation}\label{eq:W}
 W:=\bigcap_{i=1}^{I-1}\ker\mathcal T_i,
\end{equation}
where the intersection is understood to be $\cH$ if $I=1$.

\begin{lemma}\label{lem:W-dimension}

\begin{equation}\label{eq:W-dimension}
 \dim W\geq\dim\cH-(I-1)D.
\end{equation}
\end{lemma}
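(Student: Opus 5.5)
The plan is to use the rank--nullity theorem for each map $\mathcal T_i$ and then the subadditivity of codimension under intersection. First I will confirm that each $\mathcal T_i$ is a well-defined linear map $\cH\to\Lambda^2\R^N$.

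For fixed $\Theta$, the limit $\cQ_\infty(\langle\nu\wedge\omega^\sharp,\Theta\rangle,\varphi_i)$ exists by the remark following \eqref{eq:T-i-definition}. It is linear in $\omega$, because $\omega\mapsto\langle\nu\wedge\omega^\sharp,\Theta\rangle$ is linear and $\cQ_R$ is bilinear, so the limit is linear as well. It is also linear in $\Theta$. The Riesz representation on the finite-dimensional inner product space $\Lambda^2\R^N$ then gives a unique vector $\mathcal T_i(\omega)$, and $\omega\mapsto\mathcal T_i(\omega)$ is linear.

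Since the image of $\mathcal T_i$ lies in $\Lambda^2\R^N$, which has dimension $D$, we get $\operatorname{rank}\mathcal T_i\le D$. One subtlety is that $\cH$ might be infinite-dimensional; the counting is meant for the finite-dimensional quantity $\dim\cH$. If $\dim\cH=\infty$, I will instead work with an arbitrary finite-dimensional subspace $V\subset\cH$. In that case $\dim(W\cap V)\ge\dim V-(I-1)D$ is arbitrarily large, so $W$ is infinite-dimensional and the inequality holds trivially.

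Now assume $\dim\cH<\infty$, or restrict to such a $V$. Consider the combined map
\[
\mathcal T:=(\mathcal T_1,\ldots,\mathcal T_{I-1}):\cH\to(\Lambda^2\R^N)^{I-1}.
\]
Its kernel is exactly $W$, and its rank is at most $(I-1)D$. Rank--nullity then gives
\[
\dim W=\dim\cH-\operatorname{rank}\mathcal T\ge\dim\cH-(I-1)D.
\]
When $I=1$ the intersection is empty by convention, so $W=\cH$ and the bound is an equality.

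The only step needing care is the well-definedness and linearity of $\mathcal T_i$, and that is already supplied by \cref{cor:finite-index-7.3} applied with $P_0=\Id$. There is no genuine obstacle beyond this.
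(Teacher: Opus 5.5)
Your proposal is correct and is the paper's proof: both apply rank--nullity to the combined map $(\mathcal T_1,\ldots,\mathcal T_{I-1})\colon\cH\to(\Lambda^2\R^N)^{\oplus(I-1)}$, whose kernel is $W$ and whose target has dimension $(I-1)D$. Your extra remarks on the well-definedness and linearity of $\mathcal T_i$, and on the case $\dim\cH=\infty$ (handled by passing to finite-dimensional subspaces), are harmless additions that the paper leaves implicit.
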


\begin{proof}
Consider the single linear map
\[
 F:=(\mathcal T_1,\ldots,\mathcal T_{I-1}):
 \cH\longrightarrow
 (\Lambda^2\R^N)^{\oplus(I-1)}.
\]
Then $\ker F=W$, while its target has dimension $(I-1)D$.
The rank--nullity theorem gives
\begin{align*}
 \dim W
 &=\dim\cH-\operatorname{rank}F\\
 &\geq\dim\cH-\dim(\Lambda^2\R^N)^{\oplus(I-1)}\\
 &=\dim\cH-(I-1)D.
\end{align*}
\end{proof}

If $\dim W\geq N+1$,
then there is an $N$-dimensional subspace $V\subset W$ such
that
\begin{equation}\label{eq:V-disjoint-E}
 V\cap\cE=\{0\}.
\end{equation}
Now consider
\[
 \mathcal T_I|_V:V\longrightarrow\Lambda^2\R^N.
\]

\smallskip
\noindent\emph{Case 1: $\mathcal T_I|_V$ is not injective.}
Choose $0\neq\omega\in V$ such that
$\mathcal T_I(\omega)=0$.  Since $V\subset W$, we also have
$\mathcal T_i(\omega)=0$ for $i=1,\ldots,I-1$.  Take $\Omega=0$;
then $P_\Omega=\Id$, and an orthonormal basis
$\Theta_1,\ldots,\Theta_D$ of $\Lambda^2\R^N$ is an admissible pairing
set.  By \eqref{eq:T-i-definition},
\[
 \cQ_\infty(u_a^\omega,\varphi_i)=0
 \quad\text{for every }a=1,\ldots,D
 \text{ and }i=1,\ldots,I.
\]
Corollary~\ref{cor:finite-index-7.3} implies
$\omega\in\cH_0$ and $Lu_a^\omega=0$ for all $a$.  Thus
$\omega\in\cK_0$, and Proposition~8.1, in the form
\eqref{eq:prop-8.1}, gives $\omega\in\cE$.  This contradicts
$0\neq\omega\in V$ and \eqref{eq:V-disjoint-E}.

\smallskip
\noindent\emph{Case 2: $\mathcal T_I|_V$ is injective.}
Then
\[
 \dim\mathcal T_I(V)=\dim V=N.
\]
Proposition~\ref{prop:balanced-existence} gives
$0\neq\omega\in V$ such that
$ \eta:=\mathcal T_I(\omega)
$
is a nonzero balanced $2$-form. Proposition
\ref{prop:balanced-pairing} gives
$\Omega\in\Lambda^{N-4}\R^N$ for which $P_\Omega$ is positive
semidefinite and
\begin{equation}\label{eq:P-kills-eta}
 P_\Omega\eta=0.
\end{equation}
Choose an admissible pairing set
\[
 P_\Omega=\sum_{a=1}^r\Theta_a\otimes\Theta_a.
\]
Because $\omega\in V\subset W$, for $i=1,\ldots,I-1$ and every $a$,
\[
 \cQ_\infty(u_a^\omega,\varphi_i)
 =\langle\mathcal T_i(\omega),\Theta_a\rangle=0.
\]
For $i=I$, \eqref{eq:P-kills-eta} gives
\begin{align*}
 0
 &=\langle P_\Omega\eta,\eta\rangle\\
 &=\sum_{a=1}^r\langle\eta,\Theta_a\rangle^2\\
 &=\sum_{a=1}^r
   \cQ_\infty(u_a^\omega,\varphi_I)^2.
\end{align*}
Hence
$\cQ_\infty(u_a^\omega,\varphi_I)=0$ for every $a$.  Thus the
hypothesis of Corollary~\ref{cor:finite-index-7.3} holds for all $I$
bound states.  As in Case~1, that corollary and
\eqref{eq:prop-8.1} give
$\omega\in\cK_\Omega=\cE$, contradicting
$0\neq\omega\in V$ and \eqref{eq:V-disjoint-E}.

Both cases are impossible, therefore
\begin{equation}\label{eq:H-upper}
 N\geq \dim W\geq\dim\cH-(I-1)D.
\end{equation}
Combining \eqref{eq:H-upper} with the unchanged lower bound
\eqref{eq:H-dimension}, and using $N=n+1$, gives
\[
 b_1(M)+k+n-1
 \leq (I-1)\binom N2+n+1.
\]
Thus
\[
 \Ind(M)\geq
 1+\frac{b_1(M)+k-2}{D}.
\]

\begin{remark}
    When $I=1$, i.e. the intersection $W$ reduced to $\cH$ the proof is identical to that of \cite[Theorem~1.1]{CG}. This also explains why our estimate is ``sharp'' for the catenoid.
\end{remark}

\section{$\operatorname{Spin}(7)$ estimates in $\R^8$}\label{sec:spin7}

We prove \cref{thm:spin7-index-bound,thm:spin7-index-one}.  The common
algebraic ingredient is the rank-seven summand in the
$\operatorname{Spin}(7)$ decomposition of two-forms.  We record the
normalization because both the number of test functions and the final
dimension counts depend on it.

\subsection{The $\operatorname{Spin}(7)$ decomposition of two-forms}

Fix the standard oriented orthonormal coframe
$e^1,\ldots,e^8$ on $\R^8$, and write
$e^{i_1\cdots i_j}=e^{i_1}\wedge\cdots\wedge e^{i_j}$.  Consider the
Cayley form
\begin{align}\label{eq:cayley-form}
 \Phi={}&e^{1234}+e^{1256}+e^{1278}+e^{1357}
          -e^{1368}-e^{1458}-e^{1467}\notag\\
       &{}-e^{2358}-e^{2367}-e^{2457}+e^{2468}
          +e^{3456}+e^{3478}+e^{5678}.
\end{align}
The self-adjoint operator
\[
 T_\Phi:\Lambda^2\R^8\longrightarrow\Lambda^2\R^8,
 \qquad T_\Phi(\eta)=\star(\Phi\wedge\eta),
\]
has eigenvalues $3$ and $-1$, of multiplicities $7$ and $21$,
respectively; see, for example, \cite[Section~2.1]{Wal}.  Thus
\begin{equation}\label{eq:spin7-splitting}
 \Lambda^2\R^8=\Lambda^2_7\oplus\Lambda^2_{21},
 \qquad
 \Lambda^2_7=\ker(T_\Phi-3\Id),
 \qquad
 \Lambda^2_{21}=\ker(T_\Phi+\Id).
\end{equation}
If $P_7$ denotes orthogonal projection onto $\Lambda^2_7$, then the
operator of \eqref{eq:P-Omega} associated with $\Omega=\Phi$ is
\begin{equation}\label{eq:P-Phi}
 P_\Phi=\Id+T_\Phi=4P_7\geq0.
\end{equation}

For completeness, an admissible pairing set for $P_\Phi$ is
\begin{align*}
 \Theta_1&=e^{12}+e^{34}+e^{56}+e^{78},
 &\Theta_2&=e^{13}-e^{24}+e^{57}-e^{68},\\
 \Theta_3&=e^{14}+e^{23}-e^{58}-e^{67},
 &\Theta_4&=e^{15}-e^{26}-e^{37}+e^{48},\\
 \Theta_5&=e^{16}+e^{25}+e^{38}+e^{47},
 &\Theta_6&=e^{17}-e^{28}+e^{35}-e^{46},\\
 \Theta_7&=e^{18}+e^{27}-e^{36}-e^{45}.
\end{align*}
Indeed, the forms $\frac12\Theta_a$ form an orthonormal basis of
$\Lambda^2_7$, and hence
\begin{equation}\label{eq:P-Phi-factorization}
 P_\Phi=4P_7=\sum_{a=1}^7\Theta_a\otimes\Theta_a.
\end{equation}

If $\eta$ is decomposable, then $\eta\wedge\eta=0$, so
\eqref{eq:decomposable} and \eqref{eq:P-Phi} give
\begin{equation}\label{eq:P7-decomposable}
 4|P_7\eta|^2=|\eta|^2.
\end{equation}
Consequently, for every unit vector $\nu\in S^7$, the map
\begin{equation}\label{eq:R-nu-spin7}
 R_\nu:\nu^\perp\longrightarrow\Lambda^2_7,
 \qquad R_\nu(q)=P_7(\nu\wedge q),
\end{equation}
satisfies $|R_\nu(q)|=\frac12|q|$.  It is therefore injective, and it
is an isomorphism because both spaces have dimension seven.

\subsection{The improved index estimate}

\begin{proof}[Proof of \cref{thm:spin7-index-bound}]
The assertion is immediate if $\Ind(M)=\infty$, so write
$I=\Ind(M)<\infty$ and let $\varphi_1,\ldots,\varphi_I$ be the negative
bound states.  Use the admissible pairing set
$\Theta_1,\ldots,\Theta_7$ in \eqref{eq:P-Phi-factorization} and define
\[
 \mathcal F:\cH\longrightarrow\R^{7I},
 \qquad
 \mathcal F(\omega)
 =\bigl(\cQ_\infty(u_a^\omega,\varphi_i)
      \bigr)_{1\leq a\leq7,\,1\leq i\leq I}.
\]
If $\dim\cH>7I+1$, then $\dim\ker\mathcal F>1$.  Since
$\cE=\Span\{dx^8\}$ is one-dimensional, there is a nonzero
$\omega\in\ker\mathcal F\setminus\cE$.  On the other hand,
\cref{cor:finite-index-7.3} gives $\omega\in\cH_0$ and
$Lu_a^\omega=0$ for all $a$, while \eqref{eq:prop-8.1} gives
$\omega\in\cK_\Phi=\cE$, a contradiction.  Hence
\[
 \dim\cH\leq7I+1.
\]
Because $n=7$, \eqref{eq:H-dimension} yields
\[
 b_1(M)+k+6\leq\dim\cH\leq7I+1,
\]
which is equivalent to \eqref{eq:spin7-index-bound}. 
\end{proof}

\subsection{Proof of \cref{thm:spin7-index-one}}

\begin{proof}
We follow the proof of \cite[Theorem~10.1]{CG} in constructing the
harmonic $1$-forms and the associated test functions. We spell out the
dimension-dependent details.

Finite total curvature gives regularity at infinity; see
\cite[Section~2]{Sch83}.  Let $E_1,\ldots,E_k$ be the resulting regular
ends, let $\nu_\alpha\in S^7$ be the limiting unit normal of
$E_\alpha$, and put
\[
 V:=\Span\{\nu_\alpha:1\leq\alpha\leq k\},
 \qquad d:=\dim V.
\]
If $d=1$, all ends are parallel and the parallel-end version of the
argument proving \cite[Theorem~1.1]{CG} applies.  We may therefore
assume $d\geq2$.
For a linear map $S:V\to\Lambda^2_7$, define
\begin{equation}\label{eq:q-alpha-spin7}
q_\alpha:=R_{\nu_\alpha}^{-1}(S\nu_\alpha)
\in\nu_\alpha^\perp,
\end{equation}
where $R_{\nu_\alpha}$ is the linear isomorphism defined in
\textup{(41)}. The harmonic-form construction used in
\cite[Lemma~5.2 and Theorem~10.1]{CG} shows that, for every such
$S$, there exists a harmonic $1$-form $\omega$ satisfying
\begin{equation}\label{eq:omega-spin7-asymptotics}
\left|\nabla^\ell(\omega^\sharp-q_\alpha^\top)\right|
=O(r^{-6-\ell})
\quad\text{on }E_\alpha.
\end{equation}
Let $\widehat{\cH}$ be the space of all harmonic $1$-forms satisfying
\eqref{eq:omega-spin7-asymptotics} for some
$S\in\operatorname{Hom}(V,\Lambda^2_7)$. Since the vectors
$\nu_\alpha$ span $V$ and each $R_{\nu_\alpha}$ is injective, the
parameter $S$ is uniquely determined by $\omega$. Thus the assignment
$\omega\mapsto S$ is linear and surjective, and its kernel contains
the space of $L^2$ harmonic $1$-forms by the same argument as in \cite[Lemma 5.2]{CG}, whose dimension is at least
$b_1(M)+k-1$. Therefore
\begin{equation}\label{eq:Hhat-spin7-lower}
\dim\widehat{\cH}
\geq b_1(M)+k-1
+\dim\operatorname{Hom}(V,\Lambda^2_7)
=b_1(M)+k-1+7d.
\end{equation}

Using the seven forms in \eqref{eq:P-Phi-factorization}, set
\[
 u_a^\omega=\langle\nu\wedge\omega^\sharp,\Theta_a\rangle,
 \qquad a=1,\ldots,7.
\]
On $E_\alpha$ the limiting value is
\begin{align*}
\langle\nu_\alpha\wedge q_\alpha^T,\Theta_a\rangle
&=\langle\nu_\alpha\wedge q_\alpha,\Theta_a\rangle\\
 &=\langle P_7(\nu_\alpha\wedge q_\alpha),\Theta_a\rangle\\
 &=\langle S\nu_\alpha,\Theta_a\rangle\\
 &=\langle\nu_\alpha,S^*\Theta_a\rangle.
\end{align*}

It agrees with the limiting value of the translation Jacobi field
\[
 t_a:=\langle\nu,S^*\Theta_a\rangle.
\]
Therefore
\begin{equation}\label{eq:u-tilde-spin7}
 \widetilde u_a^\omega:=u_a^\omega-t_a\in\cB.
\end{equation}

We verify here the zero-flux assertion for translations.  For a fixed
$p\in\R^8$, put $t_p=\langle\nu,p\rangle$.  Since ambient
translations preserve minimality,
\[
 Lt_p=0,
 \qquad
 \nabla_Xt_p=\langle D_X\nu,p\rangle
             =-\langle AX,p^\top\rangle.
\]
Regularity of the ends gives $|A|=O(r^{-7})$, while
$|M\cap\partial B_R|=O(R^6)$ and $t_p=O(1)$.  Integration by parts,
for regular values of $R$, therefore yields
\begin{align}\label{eq:translation-zero-flux}
 \cQ_R(t_p,t_q)
 &=\int_{M\cap\partial B_R}t_p\nabla_\vartheta t_q
   =O(R^6R^{-7})=O(R^{-1})\longrightarrow0.
\end{align}
The same calculation gives
$\cQ_R(\widetilde u_a^\omega,t_a)\to0$, since
$\widetilde u_a^\omega=O(r^{-6})$ along every end.  Finally, the
boundary terms obtained by integrating the trace identity
\eqref{eq:trace-identity} are $O(R^{-1})$: in the first one this uses
$\nabla\omega=O(r^{-7})$, and in the $\Phi_M\wedge\omega\wedge
A\omega$ term it uses $A=O(r^{-7})$.  Thus the proof of
\cite[Proposition~7.2]{CG}, with the dilation field replaced by the
translation fields $t_a$, gives
\begin{equation}\label{eq:spin7-Q-zero}
 \sum_{a=1}^7
 \cQ(\widetilde u_a^\omega,\widetilde u_a^\omega)=0.
\end{equation}

Let $\varphi$ be the unique negative bound state.  Suppose that
\begin{equation}\label{eq:spin7-index-orthogonality}
 \cQ(\widetilde u_a^\omega,\varphi)=0,
 \qquad a=1,\ldots,7.
\end{equation}
The index-one decomposition and \eqref{eq:spin7-Q-zero} imply
$L\widetilde u_a^\omega=0$ for every $a$.  Since $Lt_a=0$, we also
have $Lu_a^\omega=0$.

Fix an end $E_\alpha$ and define
\[
 \widehat\omega
 :=\omega-d\langle q_\alpha,x\rangle.
\]
By \eqref{eq:omega-spin7-asymptotics}, $\widehat\omega$ decays to $0$ on
$E_\alpha$.  Moreover, each
$u_a^{d\langle q_\alpha,x\rangle}$ is a translation Jacobi field, so
$Lu_a^{\widehat\omega}=0$. Note that the argument of
\cite[Section~8]{CG} does not require $\omega$ to decay on every end but only one end, thus the same argument now gives
\[
\widehat\omega=\lambda d\langle\nu_\alpha,x\rangle.
\]
Consequently
\begin{equation}\label{eq:spin7-coordinate-kernel}
 \omega=d\langle q_\alpha+\lambda\nu_\alpha,x\rangle
 \in\Span\{dx^1,\ldots,dx^8\}.
\end{equation}

It remains to make the dimension count explicit.  Define a linear map
\[
 F:\widehat\cH\longrightarrow\R^7,
 \qquad
 F(\omega)
 =\bigl(\cQ(\widetilde u_1^\omega,\varphi),\ldots,
         \cQ(\widetilde u_7^\omega,\varphi)\bigr).
\]
There are seven components because $\dim\Lambda^2_7=7$.  By
\eqref{eq:spin7-coordinate-kernel},
\[
 \ker F\subset\Span\{dx^1,\ldots,dx^8\},
\]
Therefore
\begin{equation}\label{eq:Hhat-spin7-upper}
 \dim\widehat\cH
 =\dim\ker F+\operatorname{rank}F
 \leq\dim\Span\{dx^1,\ldots,dx^8\}+7
 \leq8+7=15.
\end{equation}
Combining \eqref{eq:Hhat-spin7-lower} and
\eqref{eq:Hhat-spin7-upper} gives
\[
 b_1(M)+k-1+7d\leq15.
\]
Since $d\geq2$, this implies
\[
 b_1(M)+k\leq2,
 \qquad\text{and hence}\qquad k\leq2.
\]
Also, $2\leq d\leq k$.
By Schoen's two-end theorem~\cite{Sch83}, 
$M$ is a higher-dimensional catenoid.
\end{proof}

\begin{remark}
The coefficient obtainable from a fixed admissible-pairing argument is controlled by the rank of the positive semidefinite self-adjoint operator $P$ on $\Lambda^2\R^N$ satisfying
\[
 \langle P\eta,\eta\rangle=|\eta|^2
 \qquad\text{for every decomposable }\eta\in\Lambda^2\R^N.
\]
Indeed, write $r=\operatorname{rank}P$ and choose a factorization $P=\sum_{a=1}^r\Theta_a\otimes\Theta_a$.  The number $r$ is precisely the number of scalar test functions arising from this pairing.  Define
\[
 F:TS^{N-1}\longrightarrow S^{N-1}\times\R^r,
 \qquad F(x,v)=\bigl(x,(\langle x\wedge v,\Theta_a\rangle)_{a=1}^r\bigr).
\]
This is a smooth bundle map covering the identity on $S^{N-1}$, and its restriction $F_x:T_xS^{N-1}=x^\perp\to\R^r$ is linear.  For $|x|=1$ and $v\perp x$, the decomposable identity gives
\[
 |F_x(v)|^2=\sum_{a=1}^r\langle x\wedge v,\Theta_a\rangle^2
 =\langle P(x\wedge v),x\wedge v\rangle
 =|x\wedge v|^2=|v|^2.
\]
Thus every $F_x$ is injective, and consequently $r\geq N-1$.  If $r=N-1$, then every $F_x$ is a linear isomorphism.  
In particular, $S^{N-1}$ is parallelizable.  Since the only parallelizable spheres are $S^0,S^1,S^3$, and $S^7$ \cite{BM58}, equality $r=N-1$ can occur in the range $N\geq4$ only when $N=4$ or $N=8$. Thus outside these two dimensions one should not expect optimization of $P$ alone either to attain the extremal coefficient $1/(N-1)$ or to extend this fixed-pairing argument to classify immersed index-one hypersurfaces as catenoids.
\end{remark}

\end{document}